\documentclass[12pt]{amsart}

\usepackage[margin=1in]{geometry}

\usepackage{float}

\usepackage[toc,page]{appendix}
\usepackage{datetime}
\usepackage[T1]{fontenc}
\usepackage{chngcntr}
\usepackage[T1]{fontenc}
\usepackage{amssymb,url,xspace,amsthm,adjustbox}
\usepackage[alphabetic,nobysame]{amsrefs}
\usepackage{mathtools}
\usepackage{graphicx}
\usepackage{amssymb}
\usepackage{mathrsfs,stmaryrd}
\usepackage{yfonts, bbm}
\usepackage{enumitem}

\newcommand\norm[1]{\lVert#1\rVert}

\usepackage{hyperref}
\hypersetup{
	colorlinks   = true, 
	urlcolor     = blue, 
	linkcolor    = blue, 
	citecolor   = green 
}

\usepackage{tikz}
\usetikzlibrary{shapes,arrows,calc,matrix}
\usepackage{tikz-cd}
\usepackage{todonotes}
\usepackage{color}

\usepackage{amsmath}
\usepackage{lipsum}
\usepackage{setspace}

\theoremstyle{plain}
\newtheorem{theorem}{Theorem}

\newtheorem{lemma}[theorem]{Lemma}
\newtheorem{corollary}[theorem]{Corollary}

\theoremstyle{definition}
\newtheorem{definition}{Definition}

\newtheorem*{conjecture*}{Conjecture}

\usepackage{scalerel, stackengine}
\stackMath
\newcommand\rwh[1]{%
	\savestack{\tmpbox}{\stretchto{%
			\scaleto{%
				\scalerel*[\widthof{\ensuremath{#1}}]{\kern-.6pt\bigwedge\kern-.6pt}%
				{\rule[-\textheight/2]{1ex}{\textheight}}
			}{\textheight}%
		}{0.5ex}}%
	\stackon[1pt]{#1}{\tmpbox}%
}

\newcommand{\mb}{\mathbb}
\newcommand{\mf}{\mathfrak}

\newcommand{\lb}{\left(}
\newcommand{\rb}{\right)}

\newcommand{\bc}{\mathbb{C}}

\newcommand{\bq}{\mathbb{Q}}

\newcommand{\bz}{\mathbb{Z}}

\newcommand{\tx}{\text}

\newcommand{\Ad}{\mathrm{Ad}}

\newcommand{\Rep}{\mathrm{Rep}}

\newcommand{\GL}{\mathrm{GL}}
\newcommand{\SL}{\mathrm{SL}}

\makeindex

\title{A short note on Arthur parameters with singular orbit closures}

\author{Kristaps John Balodis}

\date{\today}

\begin{document}
	
	\maketitle

	\begin{abstract}
		This short article resolves a question about Arthur parameters highlighted in \cite[Section 2.2]{BCFZ}, in the case of $\GL_n/F$ where $F/\bq_p$ is a $p$-adic field.  
		Namely, we prove that every Arthur parameter $\psi$ of $\GL_n/F$, can be factored into Arthur parameters $\psi\cong \psi_1\oplus\cdots \oplus\psi_r$ such that the orbit $C_\psi=C_{\psi_1}\times\cdots\times  C_{\psi_r}\subseteq V_{\lambda_{\psi_1}}\times \cdots \times V_{\lambda_{\psi_r}}$ in the associated Vogan variety has smooth closure if and only if each $\bar{C}_{\psi_i}$ is smooth for $i=1, \ldots, r$, and that $C_{\psi_i}$ has smooth closure if and only if it is open or closed. 
	\end{abstract}
	
	Let $F/\bq_p$ be a $p$-adic field and $G$ be a reductive group defined over $F$.  
	An \emph{infinitesimal parameter} $\lambda$ of $G$ is a continuous Frobenius semisimple homomorphism $\lambda:W_F\to {}^LG$ from the Weil group $W_F$ to the $L$-group ${}^LG$.  
	Every Langlands parameter $\phi:W_F\times\SL_2(\bc)\to {}^LG$ determines an infinitesimal parameter 
	$$\lambda_\phi(w):=\phi\lb w, \begin{pmatrix}
		|w|^{1/2} & 0 \\
		0 & |w|^{-1/2}
	\end{pmatrix}\rb,$$
	where $\norm{\cdot}$ is the norm on $W_F$.  
	To each infinitesimal parameter $\lambda$, \cite{Vog} associated a $\bc$-variety $V_\lambda$ and a group $H_\lambda$ acting on $V_\lambda$ in such a way that the $H_\lambda$-orbits are in bijection with the equivalence classes of Langlands parameters $\phi$ such that $\lambda=\lambda_\phi$. 
	Given $\phi$, we write $C_\phi$ for the orbit associated to $\phi$.  
	Likewise, each Arthur parameter 
	$$\psi:W_F\times\SL_2(\bc)\times\SL_2(\bc)\to {}^LG$$
	determines a Langlands parameter 
	$$\phi_\psi(w, x):=\psi\lb w, x, \begin{pmatrix}
		|w|^{1/2} & 0 \\
		0 & |w|^{-1/2}
	\end{pmatrix}\rb,$$
	and thus an orbit $C_\psi:=C_{\phi_\psi}$ in the Vogan variety of $\lambda_\psi:=\lambda_{\phi_\psi}$. 
	
	In \cite[Section 2.2]{BCFZ} it was demonstrated in many cases that for an Arthur parameter $\psi$ of $\GL_n$ or a classical group that $C_\psi$ is open, closed, or has singular closure.  
	Notably, it was pointed out in \cite[Example 2.10]{BCFZ} that this fails for $G_2$, but only for exactly two Arthur parameters, having equivalent infinitesimal parameter $\lambda$.    
	In this case, $H_\lambda=\mb{G}_m^2$ acts on $V_\lambda=\mb{A}_\bc^2$ by $(s, t)(x, y)=(sx, st^2y)$.  
	For the surjection
	\begin{align*}
		H_\lambda &\xrightarrow{\varphi} \mb{G}_m^2\\
		(s, t)&\mapsto(s, st^2)
	\end{align*}
	and for the identity isomorphism $\tx{Id}:V_\lambda\to \mb{A}_\bc^2$ of varieties,  $\tx{Id}(gx)=\varphi(g)\tx{Id}(x)$ for all $x\in V_\lambda$ and all $g\in H_\lambda$.  
	This motivates us to introduce the following terminology.  
	
	\begin{definition}
	A  $G$-variety $X$ is said to be \emph{factorizable} if there exists an $H_i$-variety $Y_i$ for $i=1, 2$, such that neither $Y_i$ is a point, a surjective map $\varphi:G\to H_1\times H_2$ and an isomorphism $f:X\to Y_1\times Y_2$ such that for all $g\in G$ and all $x\in X$,
	$$f(gx)=\varphi(g)f(x).$$
	\end{definition}
	
	Note that we do not require $\varphi$ to be an isomorphism, since if we have an $H$-variety $Y$, and a surjection $\varphi:G\to H$, then the $G$-action on $Y$ given by composing with $\varphi$ has the same orbits as $H$ on $Y$. 
	
	We now reformulate the hypothesis as:  If $\psi$ is an Arthur parameter of a split reductive group such that $V_{\lambda_\psi}$ is non-factorizable, then $C_\psi$ is open, closed, or has singular closure.   
	We will prove this for $\GL_n$ in Theorem \ref{thm}, and in fact, we will provide even more refined information about exactly when Arthur parameters of $\GL_n$ have orbits of smooth closure.  

	From the discussion in \cite[Sections 3.4, 3.5]{Bal} for every infinitesimal parameter $\lambda$ of $\GL_n(F)$, there exists finite extensions $E_1, \ldots, E_r$ of F, integers $m_1, \ldots, m_r\leq n$ and unramified infinitesimal parameters $\lambda_i$ of $\GL_{m_i}(E_i)$ for $i=1, \ldots, r$ such that there is an equivariant isomorphism $V_{\lambda}\cong V_{\lambda_1}\times\cdots \times V_{\lambda_r}, H_{\lambda}\cong H_{\lambda_1}\times\cdots \times H_{\lambda_r}$.  
	Moreover, \cite{Bal} describes the precise matching between orbits in $V_{\lambda}$ and orbits in $ V_{\lambda_1}\times\cdots \times V_{\lambda_r}$,  using Zelevinsky's multisegments.  
	Together with the description of multisegments of Arthur parameters in \cite{CR24}, one can see that for an Arthur parameter $\psi$ with $\lambda_\psi$ equivalent to $\lambda$, there exist unramified Arthur parameters $\psi_1, \ldots, \psi_r$ with $\lambda_{\psi_i}$ equivalent to $\lambda_i$, and that $C_\psi$ gets sent to $C_{\psi_1}\times\cdots \times C_{\psi_r}$ under the above isomorphism.  
	A special case of this was mentioned in the proof of \cite[Theorem 6.4]{CR24}.

	Thus, for $\GL_n(F)$, it suffices to prove the results of this article in the case where $\lambda$ is \emph{unramified}, meaning it is trivial on the inertia subgroup of $W_F$.  
	Let $q$ be the cardinality of the residue field of $F$, and fix a choice of Frobenius element $\mf{frob}\in W_F$ such that $\norm{\mf{frob}}=q$.  
	Then, for unramified $\lambda$,
	 \begin{align*}
	 	V_\lambda&:=\{x\in \mf{gl}_n(\bc) | \  \Ad(\lambda(\mf{frob}))x=qx\},\\
	 	H_\lambda&:=\{g\in \GL_n(\bc) | \ \lambda(\mf{frob})g=g\lambda(\mf{frob})\}.
	 \end{align*}
		
	Viewed as a representation of $W_F\times \SL_2(\bc)\times\SL_2(\bc)$, the irreducible unramified Arthur parameters of $\GL_n(F)$ are of the form
	$$\psi(w, x, y)=\norm{\cdot}^{bi}\boxtimes S^d(x)\boxtimes S^a(y),$$
	for the norm $\norm{\cdot}$ on $W_F$, $b\in [0, 2\pi/\log q )$, and where $S^n$ denotes the $(n+1)$-dimensional irreducible representation $\tx{Sym}^n(\bc^2)$ of $\SL_2(\bc)$.  
	Every unramified Arthur parameter of $\GL_n$ is given by a direct sum of irreducible unramified Arthur parameters. 

	By \cite[Equation (18)]{CR24} one can see that if $\psi$ is an irreducible unramified Arthur parameter of the form $\psi=1\boxtimes S^d\boxtimes S^a$, then the eigenvalues of $\lambda_\psi(\mf{frob})$ are all of the form $q^n$ for some $n\in \bz$, or all of the form $q^{(2n+1)/2}$ for some $n\in \bz$.  
	Hence, for $\psi=\norm{\cdot}^{ib}\boxtimes S^d\boxtimes S^a$, the eigenvalues of $\lambda_\psi(\mf{frob})$ are all of the form $q^{ib}q^n$ for some $n\in \bz$, or all of the form $q^{ib}q^{(2n+1)/2}$ for some $n\in \bz$.  
	Therefore, we can decompose an arbitrary unramified Arthur parameter $\psi\cong \psi_1^e\oplus\psi_1^o\oplus \cdots \oplus \psi_r^e\oplus \psi_r^o$ such that for some distinct $b_1, \ldots, b_r\in [0, 2\pi /\log q)$, the eigenvalues of $\lambda_{\psi_j^e}(\mf{frob})$ are of the form $q^{ib_j}q^{n/2}$ for some even $n\in \bz$, and the eigenvalues of $\lambda_{\psi_j^o}(\mf{frob})$ are of the form $q^{ib_j}q^{n/2}$ for some odd $n\in \bz$. 
	Moreover, by essentially following the argument in \cite[Lemma 3.1]{Bal}, one can see that
	$$V_{\lambda_\psi}\cong V_{\lambda_{\psi_1^e}}\times V_{\lambda_{\psi_1^o}}\times\cdots \times V_{\lambda_{\psi_r^o}},$$
	with
	 	$$H_{\lambda_\psi}\cong H_{\lambda_{\psi_1^e}}\times H_{\lambda_{\psi_1^o}}\times\cdots \times H_{\lambda_{\psi_r^o}},$$
	 	acting component-wise. 
	 	Thus an orbit $C\cong C_1^e\times C_1^o\times \cdots \times C_r^o$ has smooth closure if and only if every $C_j^e, C_j^o$ has smooth closure. 
	
	For $\varepsilon\in\{e, o\}$, define
	$$D:=\max\{d+a : \norm{\cdot}^{b_ji}\boxtimes S^{d}\boxtimes S^{a} \tx{ is a factor of } \psi_j^\varepsilon \}.$$
	 The eigenvalues of $\lambda_{\psi_j^\varepsilon}(\mf{frob})$ are given by
	  $$q^{b_ji}q^{-D/2}, q^{b_ji}q^{-D/2+1}, \ldots, q^{b_ji}q^{-D/2+k-1}$$
	   for $k:=D+1$.  
	Letting $m_1, \ldots, m_{k}$ denote the multiplicities of the respective eigenvalues, the Vogan variety $V_{\lambda_{\psi_j^\varepsilon}}$ is isomorphic to
	$$\{(x_{1,2}, \ldots, x_{k-1, k})  \mid x_{i, i+1}\in M_{m_i, m_{i+1}}(\bc)\},$$
	which is acted upon by the group $\prod_{i=1}^k\GL_{m_i}(\bc)$ via 
	$$(g_1, \ldots, g_k)\cdot(x_{1,2}, \ldots, x_{k-1,k})=(g_1x_{1,2}g_2^{-1}, g_2x_{2,3}g_3^{-1}, \ldots, g_{k-1}x_{k-1,k}g_k^{-1}).$$

	Define $X_{i,i}:=I_{m_i, m_i}$, and for $i<j$ define
	$$X_{i,j}:=x_{i, i+1}\cdot x_{i+1, i+2}\cdots x_{j-1, j}.$$
By the contiguity theorem of \cite{Zel}, the $H_\lambda$-orbits of $V_\lambda$ are determined by the ranks $r_{i,j}:=\tx{rank} X_{i,j}$, and the closure of the orbit is given by those $(x_{1,2}, \ldots, x_{k-1,k})\in V_\lambda$ for which $\tx{rank} X_{i,j}\leq r_{i, j}$. 
Note that the rank of $X_{i,j}$ is at most 
	$$M_{i,j}:=\min\{m_i, \ldots, m_j\}.$$

	\begin{lemma}\label{lem}
		Let $\lambda$ be an unramified infinitesimal parameter of $\GL_n/F$ for which $\lambda(\mf{frob})$ has eigenvalues $q^{e_1}, \ldots, q^{e_1+k-1}$ for some $e_1\in \bc, k\in \bz^+$.  
		An orbit $C$ in $V_\lambda$ with corresponding tuple $(r_{i,j})$ has smooth closure if and only if for every $i<j$ either there is some $i\leq l<j$ with $r_{l, l+1}=0$, or $r_{i,j}=M_{i,j}$. 
	\end{lemma}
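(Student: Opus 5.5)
The plan is to use the fact that the orbit closures here are cones, so that smoothness forces the closure to be a linear subspace. Such subspaces can then be classified using the representation theory of $H_\lambda=\prod_{i=1}^k\GL_{m_i}(\bc)$ acting on $V_\lambda=\prod_{l=1}^{k-1}M_{m_l,m_{l+1}}(\bc)$.

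\emph{Step 1 (reduction to linear subspaces).} The scalar matrices $t\cdot I_{m_1}\in \GL_{m_1}(\bc)$, placed in the first factor, only scale the first coordinate $x_{1,2}$. A cleaner device is to take $(t^{k-1}I_{m_1}, t^{k-2}I_{m_2}, \ldots, I_{m_k})\in H_\lambda$. This acts on every $x_{l,l+1}$ by multiplication by $t$, so every orbit $C$ is stable under scaling by $\bc^\times$. Hence $\bar C$ is a closed cone containing $0$. A closed cone that is smooth at its vertex $0$ coincides with its tangent cone there, which is then the linear space $T_0\bar C$. So $\bar C$ is smooth if and only if $\bar C$ is a linear subspace of $V_\lambda$; the \lq\lq if\rq\rq{} direction is trivial. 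Moreover such a subspace is $H_\lambda$-stable.

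\emph{Step 2 (classifying $H_\lambda$-stable subspaces).} Each factor $M_{m_l,m_{l+1}}(\bc)$ with $m_l,m_{l+1}>0$ is an irreducible representation of $\GL_{m_l}\times\GL_{m_{l+1}}$, namely $\bc^{m_l}\boxtimes(\bc^{m_{l+1}})^*$. Distinct $l$ give pairwise non-isomorphic $H_\lambda$-representations, since they have different central characters: the scalars of the various $\GL_{m_i}$ act through different characters. Therefore every $H_\lambda$-stable linear subspace is of the form
$$V_S:=\{x : x_{l,l+1}=0 \text{ for } l\notin S\}$$
for some subset $S\subseteq\{1,\ldots,k-1\}$. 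Combining with Step 1, $\bar C$ is smooth if and only if $\bar C=V_S$ for some $S$.

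\emph{Step 3 (matching with the rank condition).} Suppose $\bar C=V_S$. The orbit $C$ is dense in $\bar C$, and the set where $x_{l,l+1}\neq 0$ is open. It follows that $S=\{l : r_{l,l+1}>0\}$.

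Next I compute the generic ranks on $V_S$. If some $l\in[i,j)$ lies outside $S$, then $X_{i,j}=0$ identically on $V_S$. Otherwise, a generic product of matrices $x_{i,i+1}\cdots x_{j-1,j}$ has rank $M_{i,j}$. I will check this by choosing each $x_{l,l+1}$ to be a partial identity (ones on the diagonal), whose product has rank $\min\{m_i,\ldots,m_j\}$; lower semicontinuity of rank then gives the generic value. Since $C$ is the dense orbit of $V_S$, its ranks are these generic ranks. So $\bar C=V_S$ forces exactly the stated condition: for each $i<j$, either some $r_{l,l+1}=0$ with $i\le l<j$, or $r_{i,j}=M_{i,j}$.

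Conversely, assume the condition and set $S=\{l: r_{l,l+1}>0\}$. Then the rank tuple of $C$ coincides with the generic rank tuple of $V_S$. By the contiguity theorem of \cite{Zel}, orbits are determined by their rank tuples, so $C$ is the dense orbit of $V_S$. As $V_S$ is closed and irreducible, $\bar C=V_S$, which is smooth.

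\emph{Main obstacle.} The only delicate points are the non-isomorphism argument in Step 2 and the case of factors with $m_l=0$; the latter are zero spaces and can simply be discarded. The non-isomorphism claim is easy via the central tori. The generic-rank computation in Step 3 is routine. I expect the write-up to be short once Step 1 is stated carefully, in particular the fact that a cone smooth at its vertex is linear.
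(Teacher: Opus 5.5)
Your proof is correct, but it takes a genuinely different route from the paper's.

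\textbf{The paper's route.} It obtains $(\Leftarrow)$ by reading $\bar C$ off Zelevinsky's closure description $\{\tx{rank}\, X_{i,j}\le r_{i,j}\}$. It proves $(\Rightarrow)$ by contrapositive: it splits $\bar C=A\times B\times D$ along a maximal run $[a,b]$ of nonzero $r_{l,l+1}$, and invokes \cite[Theorem 2.2]{LM} to know that the rank minors generate the ideal of $B$. It then applies the Jacobian criterion at the origin. Every generator has degree $(t-s)(r_{s,t}+1)\ge 2$, so $T_0B$ is the full ambient space while $B$ is a proper subvariety.

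\textbf{Your route.} You use the cocharacter $t\mapsto(t^{k-1}I_{m_1},\ldots,I_{m_k})$ to see that orbit closures are cones, so smooth ones are linear. The multiplicity-freeness of $V_\lambda$ under the central torus of $H_\lambda$ then forces $\bar C=V_S$, and you finish by comparing generic ranks.

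\textbf{What each buys.} Your argument needs neither the reducedness theorem of \cite{LM} nor the closure description, only that orbits are classified by their rank tuples. In fact your Step 2 supplies exactly what the paper's Jacobian step takes from \cite{LM}: that the ideal of $B$ contains no linear forms. Orbits in any Vogan variety are conical, so your cone argument may also sidestep the missing reducedness result that the author names as the obstacle for classical groups. It would replace it with the question of which $H_\lambda$-stable linear subspaces are orbit closures. The paper's argument, in turn, is shorter once \cite{LM} is granted, and it stays in the language of defining equations.

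\textbf{One step to write out.} You should justify that a closed $\bc^\times$-stable subvariety that is smooth at $0$ is linear. Its coordinate ring is standard graded and coincides with its associated graded ring at the vertex. Regularity forces this to be a polynomial ring on degree-one generators, so the ideal is generated by linear forms.
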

	\begin{proof}
		$(\impliedby)$ Suppose $C$ is an orbit with a rank triangle of above form, and let $i_1, \ldots, i_l$ be the indices for which $r_{i_j, i_j+1}\neq 0$.  
		Then, 
		$$\bar{C}\cong M_{m_{i_1}, m_{i_1+1}}(\bc)\times\cdots \times M_{m_{i_l}, m_{i_l+1}}(\bc)$$
		which is smooth.  
		
		$(\implies)$  We proceed by proving the contrapositive, that is, suppose there is some $i<j$ such that $r_{l, l+1}\neq 0$ for all $i\leq l <j$, and $r_{i,j}\neq M_{i,j}$.   
		Let $[a, b]$ be the largest interval of integers such that $a\leq i, j\leq b$ and $r_{l, l+1}\neq 0$ for all $l\in [a, b-1]$.  
		Then, $\bar{C}$ is the product of the following 3 varieties
		\begin{align*}
			A:=&\{(x_{1,2}, \ldots, x_{a-2, a-1}, 0) : \tx{ rank } X_{i,j}\leq r_{i,j},  1\leq i <j\leq a-1\},\\
			B:=&\{(x_{a, a+1}, \ldots, x_{b-1, b}) : \tx{ rank } X_{i,j}\leq r_{i,j}, a\leq i <j\leq b\},\\
			D:=&\{(0, x_{b+1, b+2}, \ldots, x_{k-1, k}) : \tx{ rank } X_{i,j}\leq r_{i,j}, b+1\leq i <j\leq k\}.
		\end{align*}
		Thus it suffices to prove that $B$ is singular.  
		First note that 
		$$(x_{a, a+1}, \ldots, x_{b-1, b})=(0, \ldots, 0)$$
		is in $B$.  
		 By \cite[Theorem 2.2]{LM}, the equations given by the $(r_{s,t}+1)\times(r_{s,t}+1)$-minors of $X_{s, t}$ when $r_{s, t}\neq M_{s, t}$
		 define the orbit closure scheme-theoretically, and in particular the resulting scheme is reduced.  
		 Therefore it suffices to determine the singular locus using the Jacobian conditions on the defining polynomials.   
		
		By assumption, there is at least one $r_{s, t}\neq M_{s, t}$, and the equations determined by $X_{s, t}$ are homogeneous polynomials of degree $(t-s)(r_{s,t}+1)$, and thus
		$$(x_{a, a+1}, \ldots, x_{b-1, b})=(0, \ldots, 0)$$
		satisfies the equations and all of their partial derivatives so long as none of the polynomials are degree 1.  
		This would only be possible if $t=s+1$ and $r_{s, t}=r_{s, s+1}=0$, contrary to our assumptions (since $a\leq s<t\leq b$).  
		Therefore $B$, hence $\bar{C}$ must be singular.

	\end{proof}
	
	In the following we employ the language of \emph{multisegments} in our proof, and their connection with the ranks $r_{i,j}$ describing a corresponding orbit.    
	While multisegments date back at least to the work of Bernstein-Zelevinsky, we refer the reader to \cite{CR24} for a modern exposition, including those multisegments specifically attached to Arthur parameters.

	\begin{theorem}\label{thm}
		Given a $p$-adic field $F/\bq_p$ and $\psi$ an unramified Arthur parameter of $\GL_n/F$, with decomposition $\psi=\psi_1^e\oplus \psi_1^o\oplus \cdots \oplus \psi_r^o$ as above,
		 $$C_\psi=C_{\psi_1^e}\times C_{\psi_1^o}\times \cdots \times C_{\psi_r^o}$$
		  has smooth closure if and only if each $C_{\psi_j^e}$ (resp. $C_{\psi_j^o}$) is open or closed. 
	\end{theorem}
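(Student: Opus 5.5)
By the product decomposition of $V_{\lambda_\psi}$ and $H_{\lambda_\psi}$ established above, $\bar C_\psi$ is smooth if and only if each factor $\bar C_{\psi_j^\varepsilon}$ is smooth. Open orbits and closed orbits trivially have smooth closure (an open orbit has closure the affine space $V_\lambda$, and a closed orbit is its own closure and is homogeneous). So the theorem reduces to the following claim. Let $\psi$ be an unramified Arthur parameter whose Frobenius eigenvalues are $q^{ib}q^{-D/2},\dots,q^{ib}q^{-D/2+k-1}$, with a single $b$ and a single parity. If $\bar C_\psi$ is smooth, then $C_\psi$ is open or closed.

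The plan is to prove this using Lemma \ref{lem} and the multisegment description of $C_\psi$ from \cite{CR24}. Write $\psi=\bigoplus_\alpha \norm{\cdot}^{bi}\boxtimes S^{d_\alpha}\boxtimes S^{a_\alpha}$. The multisegment of $\phi_\psi$ is then the union, over $\alpha$, of $a_\alpha+1$ segments of length $d_\alpha+1$. These segments are centered at the same point and staggered by steps of $1$, so they form a "rectangle" in the Zelevinsky picture. The rank $r_{i,j}$ counts the segments containing both the $i$th and the $j$th eigenvalue, and $m_i$ counts the segments containing the $i$th eigenvalue.

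Two consequences follow from the centering. First, all rectangles are centered at $0$ and each is an interval of consecutive integers, so the supports are nested intervals symmetric about the center. Hence every $m_i\ge 1$ and the support is the full interval $[1,k]$. Second, we identify the extreme orbits.
\begin{itemize}
\item $C_\psi$ is closed, i.e.\ the zero orbit, precisely when all segments have length one, i.e.\ every $d_\alpha=0$.
\item $C_\psi$ is open precisely when $r_{i,j}=M_{i,j}$ for all $i<j$. This is the generic orbit.
\end{itemize}

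Now assume $\bar C_\psi$ is smooth and $C_\psi$ is not closed. The goal is to show $r_{i,j}=M_{i,j}$ for all $i<j$. Since some $d_\alpha\ge 1$, the central summand of that rectangle contributes a segment of length $d_\alpha+1\ge 2$ around the center. The aim is to show that every $r_{l,l+1}\neq0$, i.e.\ that every adjacent pair of eigenvalues lies in a common segment. Once this holds, the criterion of Lemma \ref{lem}, applied with $i=1$ and $j=k$ and then with all $i<j$, forces $r_{i,j}=M_{i,j}$ everywhere, so $C_\psi$ is open.

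The main obstacle is establishing $r_{l,l+1}\neq 0$ for all $l$, because a rectangle with $d_\alpha=0$ and large $a_\alpha$ produces only singleton segments at its edges. The plan is to handle this by contradiction. Suppose $r_{l,l+1}=0$ for some $l$. Then no segment crosses the gap between positions $l$ and $l+1$. However, the rectangle with $d_\alpha\ge1$ has a segment through the center. Tracing outward from the central segment, one looks for a pair $i<j$ straddling a segment of length at least $2$ in which $r_{i,j}<M_{i,j}$ while no adjacent rank in $[i,j]$ vanishes. For instance, take $i,j$ to be the endpoints of a longest segment, and compare with the singletons of a rectangle having $d=0$, which raise $m_i$ but not $r_{i,j}$. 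Such a pair violates Lemma \ref{lem}, contradicting smoothness.

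More concretely, the approach is to split into two cases:
\begin{itemize}
\item all summands have $d_\alpha\ge1$, where every adjacent pair is covered and one checks the minimum condition directly;
\item some summand has $d_\alpha=0$ and $a_\alpha\ge1$ alongside one with $d_\beta\ge1$, where the endpoints of the long segment give $r_{i,j}<M_{i,j}$.
\end{itemize}

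Careful bookkeeping of $r_{i,j}$ versus $M_{i,j}$ for sums of rectangles is the step I expect to require the most care. In particular, it is not yet clear that the first case always yields $r_{i,j}=M_{i,j}$. If it does not, then the pair $(i,j)$ witnessing the failure gives singularity by Lemma \ref{lem} directly, since in that case all adjacent ranks are nonzero. Either way the dichotomy of Lemma \ref{lem} closes the argument.
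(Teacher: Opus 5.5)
The parts of your proposal that match the paper are fine: the reduction to a single block, the remark that open and closed orbits have smooth closure, and the endgame (once every $r_{l,l+1}\neq 0$, Lemma~\ref{lem} forces $r_{i,j}=M_{i,j}$). The gap is the step you flag yourself: showing that a smooth, non-closed orbit has no vanishing adjacent rank.

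Your concrete Case~2 mechanism is false. Take $\psi=1\boxtimes S^0\boxtimes S^1\oplus 1\boxtimes S^3\boxtimes S^0$, where both values of $a+d$ are odd. The multisegment is $\{-\tfrac12\},\{\tfrac12\},[-\tfrac32,\tfrac32]$, so $(m_1,m_2,m_3,m_4)=(1,2,2,1)$. The endpoints of the longest segment give $r_{1,4}=1=M_{1,4}$, because the singletons raise only the inner multiplicities, not the minimum. The orbit is in fact singular, but the witness is $(2,3)$, with $r_{2,3}=1<2=M_{2,3}$.

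Your case split also misses summands $1\boxtimes S^0\boxtimes S^0$. There, a $d=0$ summand next to a $d\geq 1$ one need not produce singularity at all. For example, $1\boxtimes S^0\boxtimes S^0\oplus 1\boxtimes S^2\boxtimes S^0$ has $m=(1,2,1)$ and $r_{i,j}=M_{i,j}$ for all $i<j$, so its orbit is open.

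In your contradiction paragraph, ``the singletons raise $m_i$ but not $r_{i,j}$'' only yields $r_{i,j}<M_{i,j}$ if some $d=0$ rectangle has a singleton at \emph{every} position of $[i,j]$. You never derive that from the assumed gap $r_{l,l+1}=0$.

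The missing idea is that only a summand attaining $D=\max_\alpha(a_\alpha+d_\alpha)$ matters.
\begin{enumerate}
\item If some such summand has $d\geq 1$, its $a+1$ staggered segments of length $d+1\geq 2$ cover every adjacent pair in $[-D/2,D/2]$. So all $r_{l,l+1}\geq 1$, and Lemma~\ref{lem} gives smooth closure if and only if $C_\psi$ is open.
\item If every such summand has $d=0$, then $1\boxtimes S^0\boxtimes S^D$ puts a singleton at every eigenvalue. Hence $r_{l,l+1}\leq \min(m_l,m_{l+1})-1<M_{l,l+1}$ for all $l$. The instances $j=i+1$ of Lemma~\ref{lem} then force every $r_{l,l+1}=0$, which is the closed orbit.
\end{enumerate}
This is the paper's dichotomy. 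It also supplies what your contradiction step lacks: a gap $r_{l,l+1}=0$ can only occur in the second case, where the singletons do cover every position, so your longest-segment witness works there.
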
  
	{\allowdisplaybreaks
	\begin{proof}
		First, we suppose that there is only one factor in the decomposition of $\psi$.  
		Thus
		$$\psi=\bigoplus_{j=1}^r\norm{\cdot}^{bi}\boxtimes S^{d_j}\boxtimes S^{a_j},$$
		for some fixed $b\in[0, 2\pi/\log q)$ and either  for all $j$, $(a_j+d_j)$ is even, or for all $j$, $a_j+d_j$ is odd.   
		As varieties $V_{\lambda_\psi}=V_{\lambda_{\norm{\cdot}^{-bi}\psi}}$, and thus it suffices to consider when $b=0$.  
		Let $D:=\max\{a_i+d_i : 1\leq i \leq r\}$.
		The eigenvalues of $\lambda_\psi(\mf{frob})$ are exactly $q^{-D/2 +n}$ for $n=0, \ldots, D$.  
		Writing $e_1:=-D/2$ and $e_i:=-D/2 +i-1$, we note that if for some $j$ for which $a_j+d_j=D$ we have $d_j>0$, then the segments corresponding to $1\boxtimes S^{d_j}\boxtimes S^{a_j}$ have length at least two, and every pair $e_l, e_{l+1}$ is contained in some segment of length at least two.   
		Therefore, in the corresponding orbit, $r_{l, l+1}\geq 1$ for all $1\leq l \leq  D$.  
		If every $r_{i,j}=M_{i,j}$, then $C_\psi$ would be open.  
		Hence, if $C_\psi$ is not open, then by Lemma \ref{lem} it must have singular closure.

		Suppose now that $
		d_i=0$ for every $i$ such that $a_i+d_i=D$.
		Choosing one such $j$, we have $a_j=D$, and the segments corresponding
		to the factor $
		1\boxtimes S^0\boxtimes S^D$
		are precisely the singletons
		\[
		[e_1, e_1],[e_2, e_2],\ldots,[e_{D+1}, e_{D+1}].
		\]
		For every $1\leq l\leq D$, these $e_i$ contribute 1 to the multiplicities $m_i$.  
		Thus, for every $1\leq l \leq D$ we must have
		 $$r_{l, l+1}\leq \min\{m_l-1, m_{l+1}-1\}<\min\{m_l, m_{l+1}\}=M_{l, l+1}.$$
		Hence, by Lemma \ref{lem}, the only case when $\bar{C}_\psi$ is smooth is when $r_{l, l+1}=0$ for all $1\leq l \leq D$, which is exactly when $C_\psi$ is the closed orbit.   
		
		In other words, we have demonstrated that $C_\psi$ is open, closed, or has singular closure.  
		
		{\allowdisplaybreaks
		Now, in the general case
		$$\bar{C}_\psi\cong \bar{C}_{\psi_1^e}\times\cdots \times  \bar{C}_{\psi_r^o}\subseteq V_{\lambda_{\psi_1^e}}\times\cdots \times  V_{\lambda_{\psi_r^o}}\cong V_{\lambda_\psi}$$
		is smooth if and only if each $\bar{C}_{\psi_j^e}$ and $\bar{C}_{\psi_j^o}$ are both smooth, and by the above this only occurs when each $C_{\psi_j^e}$ is open or closed, and $C_{\psi_j^o}$ is open or closed. 
	}
	\end{proof}
}
	
	{\allowdisplaybreaks
	The above argument should carry over to each of the other classical groups as well.  
	The only barrier is that at the time of writing there is no result akin to \cite[Theorem 2.2]{LM} giving a set of equations describing the orbit closures as a reduced scheme.  
	Most likely, it is the case, but it remains unproven.  
	
	Since Speh representations of $\GL_n(F)$ are of Arthur type with irreducible Arthur parameter, we have the following immediate Corollary relating the geometry to representation-theoretic objects. 
	
	\begin{corollary}
		For the Langlands parameter $\phi$ of a Speh representation of $\GL_n(F)$, $C_\phi$ is open, closed, or has singular closure. 
	\end{corollary}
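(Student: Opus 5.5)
The corollary should follow almost directly from Theorem \ref{thm}, applied to a Speh representation; the plan is to check that its Arthur parameter lands in the single-factor case of that theorem. A Speh representation of $\GL_n(F)$ is of Arthur type, with Arthur parameter an irreducible $\psi=\sigma\boxtimes S^{d}\boxtimes S^{a}$, where $\sigma$ is an irreducible representation of $W_F$. Its Langlands parameter is $\phi=\phi_\psi$, so $C_\phi=C_\psi$, and it suffices to show that $C_\psi$ is open, closed, or has singular closure.

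\textbf{Step 1: reduce to the unramified case.} By the reduction from \cite{Bal} and \cite{CR24} recalled above, there is an equivariant isomorphism $V_{\lambda_\psi}\cong V_{\lambda_1}\times\cdots\times V_{\lambda_r}$ sending $C_\psi$ to $C_{\psi_1}\times\cdots\times C_{\psi_r}$. For irreducible $\sigma$ the infinitesimal parameter involves only one inertial class, so I expect $r=1$. The $\psi_1$ that replaces $\psi$ should then again be irreducible: an unramified parameter $\norm{\cdot}^{bi}\boxtimes S^{d}\boxtimes S^{a}$ of $\GL_{m}(E)$ for a suitable extension $E$. Confirming that irreducibility and the single-factor property survive this reduction is the step I expect to need the most care, since the paper only sketches the matching of multisegments. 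The natural check is that the multisegment of $\sigma\boxtimes S^d\boxtimes S^a$ is built from segments all lying on one cuspidal line.

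\textbf{Step 2: apply the single-factor case of Theorem \ref{thm}.} For an irreducible unramified $\psi=\norm{\cdot}^{bi}\boxtimes S^{d}\boxtimes S^{a}$, the decomposition $\psi_1^e\oplus\psi_1^o\oplus\cdots$ has exactly one nonzero summand, determined by the parity of $a+d$. This is precisely the first part of the proof of Theorem \ref{thm}, which shows that $C_\psi$ is open, closed, or has singular closure.

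In fact the argument there simplifies in this case, because $a+d=D$. If $d>0$, then every $r_{l,l+1}\geq 1$, so by Lemma \ref{lem} the closure is smooth only if all $r_{i,j}=M_{i,j}$, i.e.\ only if $C_\psi$ is open; otherwise it is singular. If $d=0$, the parameter is $1\boxtimes S^0\boxtimes S^{a}$, all multiplicities $m_i$ equal $1$, and the orbit is given by singleton segments. Then every $r_{l,l+1}=0$, so $C_\psi$ is the closed orbit. In either case the conclusion holds.
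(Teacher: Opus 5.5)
Your proposal is correct and follows the paper's route. The paper treats the corollary as immediate from Theorem \ref{thm}, because a Speh representation has an irreducible Arthur parameter $\sigma\boxtimes S^{d}\boxtimes S^{a}$; like you, it implicitly relies on the reduction to the unramified single-factor case. Your direct check covers the same two cases as the theorem's proof, specialized to one summand: if $d=0$, all $m_i=1$ and the orbit is closed; if $d>0$, every $r_{l,l+1}\geq 1$, so Lemma \ref{lem} gives an orbit that is either open or has singular closure.
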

	
	Given the enumeration of Vogan varieties for unramified infinitesimal parameters of $G_2$ in \cite{BCFZ}, one can verify by inspection that the analogous result to Theorem \ref{thm} holds for $G_2$.  
	In particular, the only two Arthur orbits which are not open, closed, or have singular closure belong to the only factorizable Vogan variety appearing in \cite[Example 2.10]{BCFZ}.
}

\textbf{Statement of AI use}

All the theorems, ideas, proofs, and text of this document are completely human-generated.  
Chat GPT 5.6 Sol suggested the reference \cite{LM} for the reducedness of the standard equations defining the orbit closures, and was used to find spelling, grammatical, and minor mathematical errors.

\end{document}